\newtheorem{theorem}{Теорема}
\newtheorem{lemma}{Лемма}
\def\UDK#1{{\leftline{УДК {#1}}}}
\def\ag#1{{\color{black}#1}}
\def\dk#1{{\color{blue}#1}}
\def \NN {\mathbb N}
\newcommand{\argmin}{\mathop{\arg\!\min}}
\newcommand{\la}{\langle}
\newcommand{\ra}{\rangle}
\def \R {\mathbb R}
\begin{document}
\renewcommand{\abstractname}{\vspace{-\baselineskip}}

$$
\\\\
$$

{\it \UDK{519.853.62}}

\begin{center}
\textbf{Ускоренный метаалгоритм для задач выпуклой оптимизации}

\textbf{А.\,В.~Гасников$^{1,2}$, Д.\,М.~Двинских$^{2,1,3}$, П.\,Е.~Двуреченский$^{3,2}$, Д.\,И.~Камзолов$^{1}$, В.\,В.~Матюхин$^{1}$, Д.\,А.~Пасечнюк$^{1}$, Н.\,К.~Тупица$^{1}$, А.\,В.~Чернов$^{1}$}

\textit{
$^{1}$Московский физико-технический институт (национальный исследовательский университет), Долгопрудный, Московская обл., Россия \\
$^{2}$Институт проблем передачи информации им. А.А. Харкевича Российской академии наук, Москва, Россия \\
$^{3}$Институт прикладного анализа и стохастики им. К. Вейерштрасса, Берлин, Германия
\\
{$^*$ e-mail: kamzolov.dmitry@phystech.edu}}

{Поступила в редакцию: 18 апреля 2020.\\
Переработанный вариант  ...............................\\Принята к публикации 24 августа 2020.}

\end{center}

\begin{abstract}
\noindent В работе предлагается \ag{оболочка, названная} <<ускоренный метаалгоритм>>, котор\ag{ая} позволяет \ag{единообразно} получать ускоренные методы решения задач выпуклой безусловной минимизации в различных постановках \ag{на базе неускоренных вариантов}. В качестве приложений \ag{приводятся} квазиоптимальные алгоритмы для минимизации гладких функций с Липшицевыми производными произвольного порядка, а также для решения гладких минимаксных задач. Предложенн\ag{ая оболочка} является более общ\ag{ей}, чем существующие, а также позволяет получать лучшие оценки скорости сходимости и практическую эффективность для ряда постановок задач.

\textbf{Ключевые слова}: выпуклая оптимизация, проксимальный ускоренный метод, тензорные методы, неточный оракул, слайдинг, каталист
\end{abstract}

\section{Введение} \label{section_1}

В последние 15 лет в численных методах гладкой выпуклой оптимизации преобладают, так называемые, ускоренные методы. Прообразом таких методов является метод тяжелого шарика Б.Т.~Поляка и моментный метод Ю.Е.~Нестерова \cite{Gasnikov2018, NesterovLectures}.
Оказалось, что для многих задач гладкой выпуклой оптимизации оптимальные методы (с точки зрения числа вычислений градиента функции; в общем случае, старших производных) могут быть найдены среди ускоренных методов \cite{Gasnikov2018, NesterovLectures, Lan2019}. 
Появилось огромное число работ, в которых предлагаются различные варианты ускоренных методов для разных классов задач, см., например, обзор литературы в \cite{Gasnikov2018, Lan2019}. Каждый раз процедура ускорения принимала свою причудливую форму. Естественно, возникло желание как-то унифицировать все это. 
В 2015 году это было сделано для широкого класса (рандомизированных) градиентных методов с помощью проксимальной ускоренной оболочки, названной Каталист\footnote{\ag{Здесь и далее в качестве названий подходов / алгоримтов иногда будут использоваться англицизмы. Дело в том, что дословный перевод исходно английских выражений на русский язык может только запутывать дело.}

\ag{Отметим также, что под <<проксимальной оболочкой>> здесь и далее имеется в виду просто проксимальный алгоритм. Слово <<оболочка>> подразумевает, что в проксимальном алгоритме на каждой итерации имеется своя внутренняя (вспомогательная) задача оптимизации, которую, как правило, нельзя решить аналитически. Ее нужно решать численно. Поэтому внешний проксимальный метод можно понимать как <<оболчку>> для метода, использующегося для решения внутренней задачи.}}  \cite{Catalyst}.
С 2013 года данные результаты стали активно переноситься на тензорные методы (использующие старшие производные) \cite{Doikov2019, Gasnikov2019, Monteiro2013, Nesterov2020}. В самое последнее время предпринимаются попытки унификации процедур ускорения  для седловых задач и задач со структурой (композитных задач) \cite{Alkousa2019, Ivanova2020, Kamzolov2020, Lin2020}. Во всех этих направлениях, по-прежнему, использовалось значительное разнообразие  ускоренных проксимальных оболочек \cite{Gasnikov2018, Catalyst, Doikov2019, Gasnikov2019, Monteiro2013, Nesterov2020, Ivanova2020, Kamzolov2020,  Gasnikov2019COLT, Bubeck2020, Jiang2020,  Ivanova2019}. 
Метод из данной работы\footnote{\ag{Строго говоря, это даже не метод (алгоритм), а скорее оболочка (в смысле определенном выше). В данной статье было выбрано название <<ускоренный метаалгоритм>>. Первое слово поясняет цель разрабатываемой оболочки -- ускорение метода, использующегося в качестве базового (решающего внутреннюю задачу). Однако, в отличие от стандартной (ускоренной) оболочки, в предложенной в данной статье оболочке все же в ряде важных случаев вспомогательная задача решается аналитически и, стало быть, говорить об этой оболочке, как <<оболочке>>, а не как об обычном алгоритме, не совсем корректно. Поэтому было решено использовать более нейтральное в этом смысле слово -- <<метаалгоритм>>.}} будет во многом базироваться на схеме из \cite{Bubeck2020}.

В данной работе показывается, что достаточно изучить всего одну ускоренную проксимальную оболочку, которая позволяет получать все известные нам ускоренные методы для задач гладкой выпуклой безусловной оптимизации. Причем в ряде случаев предложенный ускоренный метаалгоритм позволяет убирать логарифмические зазоры в оценках сложности (по сравнению с нижними оценками), имевшие место в предыдущих подходах.

\section{Основные результаты}\label{section_2}	

Рассмотрим следующую задачу ($x_*$ -- решение задачи)
\begin{equation}
\label{eq1}
\min\limits_{x \in \R^d }\{ F\left( x \right):=f\left( x \right)+g\left( x \right)\} ,
\end{equation}
где $f$ и $g$ выпуклые функции.

Везде в дальнейшем под $\|\,\cdot\,\|$ будем понимать обычную евклидову норму в пространстве $\R^d$, $$D^k f(x)[h]^k = \sum_{i_1,...,i_d \ge 0:\,\, \sum_{j=1}^d i_j = k} \frac{\partial^k f(x)}{\partial x_1^{i_1}...\partial x_d^{i_d}}h_1^{i_1} \cdot...\cdot h_d^{i_d},$$  $$\|D^k f(x)\| = \max_{\|h\|\le 1} \left\|D^k f(x)[h]^k\right\|.$$
Будем считать, что $f$ имеет Липшицевы производные порядка $p$ ($p \in \NN$):
\begin{equation}
    \|D^p f(x)- D^p f(y)\|\leq L_{p,f}\|x-y\|.
    \label{def_lipshitz}
\end{equation}
Здесь и далее (см., например, \eqref{unif_conv}) можно считать, что $x,y \in \R^d$ принадлежат евклидову шару с центром в точке $x_{*}$ и радиусом $O(\|x_0 - x_*\|)$, где $x_0$ -- точка старта \cite{Gasnikov2019}.

Введем аппроксимацию рядом Тейлора функции $f$:
\begin{equation}
    \Omega_{p}(f,x;y)=f(x)+\sum_{k=1}^{p}\frac{1}{k!}D^{k}f(x)\left[ y-x \right]^k, y\in \R^d.
    \label{eq_taylor}
\end{equation}
Заметим, что из \eqref{def_lipshitz} следует \cite{NesterovImplementable}
\begin{equation}
   \left|f(y)-\Omega_{p}(f,x;y)\right| \leq \frac{L_{p,f}}{(p+1)!}\|y-x\|^{p+1}.
    \label{eq_sumup}
\end{equation}

\begin{algorithm} [h!]
\caption{Ускоренный Метаалгоритм (УМ) (УМ($x_0$,$f$,$g$,$p$,$H$,$K$))}\label{alg:highorder}
	\begin{algorithmic}[1]
		\STATE \textbf{Input:} $p \in \NN$, $f : \R^d \rightarrow \R$, 
		$g : \R^d \rightarrow \R$, $H > 0$. 
		\STATE $A_0 = 0, y_0 = x_0$.
		\FOR{ $k = 0$ \TO $k = K- 1$}
		\STATE Определить пару $\lambda_{k+1} > 0$ и $y_{k+1}\in \R^d$ из условий
		\[
\frac{1}{2} \leq \lambda_{k+1} \frac{H \|y_{k+1} - \tilde{x}_k\|^{p-1}}{p!}  \leq \frac{p}{p+1} \,,
\]
где
\begin{equation}
\label{prox_step}
\hspace{-2em} y_{k+1} = \argmin_{y\in \R^d} \left\{\widetilde{\Omega}^k(y):=
\Omega_{p}(f,\tilde{x}_k;y)+g(y) +\frac{H}{(p+1)!}\|y-\tilde{x}_k\|^{p+1} \right\} \,,
\end{equation}

		\[
a_{k+1} = \frac{\lambda_{k+1}+\sqrt{\lambda_{k+1}^2+4\lambda_{k+1}A_k}}{2} 
\text{ , } 
A_{k+1} = A_k+a_{k+1}
\text{ , } 
\]
\[
\tilde{x}_k = \frac{A_k}{A_{k + 1}}y_k + \frac{a_{k+1}}{A_{k+1}} x_k. 
		\]
		\STATE $x_{k+1} := x_k-a_{k+1} \nabla f(y_{k+1}) - a_{k+1}\nabla g(y_{k+1})$.
		\ENDFOR
		\RETURN $y_{K}$ 
	\end{algorithmic}	
\end{algorithm}

Доказательство следующей теоремы см. в Приложении 1. Литературный обзор см. в препринте \cite{Kamzolov2020}.
\begin{theorem} \label{theoremCATD}
Пусть $y_k$ -- выход Алгоритма~\ref{alg:highorder} УМ($x_0$,$f$,$g$,$p$,$H$,$k$) после $k$ итераций при $p\geq 1$ и $H\ge (p+1)L_{p,f}$. Тогда
 \begin{equation} \label{speedCATD}
 F(y_k) - F(x_{\ast}) \leq \frac{c_p H R^{p+1}}{k^{\frac{3p +1}{2}}} \,,
 \end{equation}
где $c_p = 2^{p-1} (p+1)^{\frac{3p+1}{2}} / p!$,
$R=\|x_0 - x^{\ast}\|$. 

Более того, при $p \ge 2$ для достижения точности $\varepsilon$: $F(y_k) - F(x_{\ast}) \leq \varepsilon$ на каждой итерации УМ вспомогательную задачу \eqref{prox_step} придется перерешивать для подбора пары $(\lambda_{k+1},y_{k+1})$ не более чем $O\left(\ln\left(\varepsilon^{-1}\right)\right)$ раз.
\end{theorem}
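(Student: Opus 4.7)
План состоит в том, чтобы воспользоваться стандартной схемой анализа потенциальных функций в духе Монтейро--Свайтера, адаптированной к проксимальным шагам порядка $p$ (как в работах Бубека, Нестерова--Гасникова и Камзолова, цитируемых выше). Вводится функция Ляпунова
$$\Phi_k = A_k\bigl(F(y_k) - F(x_*)\bigr) + \frac{1}{2}\|x_k - x_*\|^2,$$
и цель -- показать, что $\Phi_{k+1} \leq \Phi_k$ для всех $k \geq 0$. Тогда оценка \eqref{speedCATD} немедленно следует из $A_K(F(y_K) - F(x_*)) \leq R^2/2$, как только будет установлена нижняя оценка $A_K \gtrsim K^{(3p+1)/2}/(H R^{p-1})$.

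Сначала выписывается условие оптимальности первого порядка для $y_{k+1}$ в задаче \eqref{prox_step}:
$$\nabla \Omega_p(f, \tilde{x}_k; y_{k+1}) + \nabla g(y_{k+1}) + \frac{H}{p!}\|y_{k+1} - \tilde{x}_k\|^{p-1}(y_{k+1} - \tilde{x}_k) = 0,$$
откуда сразу же получается оценка нормы $\|\nabla f(y_{k+1}) + \nabla g(y_{k+1})\|$ через $\|y_{k+1} - \tilde x_k\|^p$. Далее, в силу \eqref{eq_sumup} и условия $H\geq(p+1)L_{p,f}$, значение $f(y_{k+1})$ контролируется через $\Omega_p(f,\tilde x_k;y_{k+1}) + \frac{L_{p,f}}{(p+1)!}\|y_{k+1}-\tilde x_k\|^{p+1}$. Подставляя $z=(A_k/A_{k+1})y_k+(a_{k+1}/A_{k+1})x_*$, пользуясь выпуклостью $F$, определением $\tilde x_k$ и тождеством $a_{k+1}^2=\lambda_{k+1}A_{k+1}$, стандартными евклидовыми выкладками получается одношаговое неравенство
$$\Phi_{k+1} - \Phi_k \leq \frac{a_{k+1}^2}{2}\|\nabla f(y_{k+1})+\nabla g(y_{k+1})\|^2 - \frac{c\,A_{k+1} H}{p!}\|y_{k+1}-\tilde{x}_k\|^{p+1}.$$
Правое ограничение $\lambda_{k+1}H\|y_{k+1}-\tilde x_k\|^{p-1}/p!\leq p/(p+1)$ как раз гарантирует, что квадратичный член не превосходит по модулю отрицательного, откуда $\Phi_{k+1}\leq\Phi_k$. Как бонус, суммирование даёт дополнительное неравенство $\sum_{k\geq 0} A_{k+1}H\|y_{k+1}-\tilde x_k\|^{p+1}\leq C_pR^2$, которое понадобится ниже.

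Наиболее содержательная часть -- и, на мой взгляд, главное препятствие -- переложить левое ограничение $\lambda_{k+1}\geq p!/(2H\|y_{k+1}-\tilde x_k\|^{p-1})$ в нижнюю оценку роста $A_K$. Из $a_{k+1}=\sqrt{\lambda_{k+1}A_{k+1}}$ вытекает $\sqrt{A_{k+1}}-\sqrt{A_k}\geq \tfrac12\sqrt{\lambda_{k+1}}$, так что
$$\sqrt{A_K} \;\geq\; \frac{1}{2}\sum_{k=1}^K\sqrt{\lambda_k} \;\gtrsim\; \frac{1}{\sqrt{H}}\sum_{k=1}^K \|y_k-\tilde x_{k-1}\|^{-(p-1)/2}.$$
Объединение этого с априорной оценкой $\sum_{k=1}^K A_k H\|y_k-\tilde x_{k-1}\|^{p+1}\leq CR^2$ с помощью неравенства Гёльдера с показателями, тщательно подобранными так, чтобы увязать степени $(p-1)/2$ и $p+1$, плюс монотонность $A_k$, должно дать искомую оценку $A_K\gtrsim K^{(3p+1)/2}/(HR^{p-1})$ с константой, зависящей только от $p$. Подстановка в неравенство $A_K(F(y_K)-F(x_*))\leq R^2/2$ и даёт \eqref{speedCATD}.

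Для второго утверждения теоремы замечу, что при каждом фиксированном $k$ параметр $\lambda_{k+1}$ однозначно определяет $a_{k+1},A_{k+1}$, а значит и $\tilde x_k(\lambda_{k+1})$, и, как решение выпуклой задачи \eqref{prox_step}, точку $y_{k+1}(\lambda_{k+1})$. Отображение $\lambda\mapsto \lambda\,H\,\|y_{k+1}(\lambda)-\tilde x_k(\lambda)\|^{p-1}/p!$ непрерывно и монотонно возрастает по $\lambda$ (при $\lambda=0$ обращается в $0$, а при $\lambda\to\infty$ неограниченно растёт, так как $\tilde x_k$ смещается в сторону $x_k$ и $y_{k+1}$ выходит всё дальше из-за малости прокс-регуляризации относительно $\lambda$). Поэтому окно $[1/2,\,p/(p+1)]$ достигается за $O(\log\varepsilon^{-1})$ шагов бисекции по $\lambda$, при этом зависимость от $\varepsilon$ возникает лишь из необходимости решать вспомогательную выпуклую задачу \eqref{prox_step} с точностью $O(\varepsilon)$ в шаре диаметра $O(R)$.
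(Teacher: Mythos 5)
Ваше предложение идёт по существу тем же маршрутом, что и доказательство в статье (Приложение 1, опирающееся на схему Бубека и др.): ваша функция Ляпунова $\Phi_k = A_k\bigl(F(y_k)-F(x_*)\bigr)+\tfrac12\|x_k-x_*\|^2$ --- это лишь переформулировка оценивающих последовательностей $\psi_k$ из Лемм~\ref{lem:basic1}--\ref{lem:basic4}; одношаговое убывание потенциала при условии окна $1/2\le\lambda_{k+1}H\|y_{k+1}-\tilde x_k\|^{p-1}/p!\le p/(p+1)$ --- это в точности комбинация Леммы~\ref{lem:basic4} и Леммы~\ref{lem:controlstepsize} (условие Монтейро--Свайтера с $\sigma=1/2$), а ваш заключительный шаг --- неравенство Гёльдера, связывающее $\sum\sqrt{\lambda_i}$, нижнюю границу $\lambda_i\gtrsim p!/(H\|y_i-\tilde x_{i-1}\|^{p-1})$ и оценку $\sum_i A_i\|y_i-\tilde x_{i-1}\|^2/\lambda_i\lesssim R^2$ --- это именно Леммы 3.3--3.4 из \cite{Bubeck2020}, на которые ссылается статья. Этот ключевой подсчёт вы оставляете на уровне <<должно дать искомую оценку>>; статья его тоже не воспроизводит, а цитирует, так что содержательного расхождения здесь нет.

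Единственное место, где ваша аргументация действительно провисает, --- обоснование второй части теоремы (число перерешиваний $O(\ln\varepsilon^{-1})$). Вы утверждаете, что отображение $\lambda\mapsto\lambda H\|y_{k+1}(\lambda)-\tilde x_k(\lambda)\|^{p-1}/p!$ монотонно возрастает; это нигде не доказано и в общем случае сомнительно, поскольку при росте $\lambda$ одновременно меняются $a_{k+1}$, $A_{k+1}$, $\tilde x_k$ и $y_{k+1}$, и норма $\|y_{k+1}(\lambda)-\tilde x_k(\lambda)\|$ не обязана вести себя монотонно. К счастью, монотонность и не нужна: достаточно непрерывности и значений по краям интервала --- бисекция с инвариантом (<<слева значение меньше $1/2$, справа больше $p/(p+1)$>>) по теореме о промежуточном значении находит точку в окне, а логарифмическая оценка числа шагов получается из оценки длины исходного интервала и модуля непрерывности, как это сделано в леммах 3.1--3.2 работы \cite{Bubeck2020}, используемых в статье. В таком виде ваше рассуждение о бисекции следует заменить или дополнить; остальное совпадает с доказательством статьи.
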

Заметим, что приведенная выше теорема будет справедлива и при условии $H\ge 2L_{p,f}$ (независимо от $p \in \NN$). Это выводится из \eqref{eq_sumup}. Условие $H\ge (p+1)L_{p,f}$ было использовано, поскольку оно гарантирует выпуклость вспомогательной подзадачи \eqref{prox_step} \cite{NesterovImplementable}.  При этом условии и $g \equiv 0$ для $p = 1, 2, 3$ существуют эффективные способы решения вспомогательной задачи \eqref{prox_step} \cite{NesterovImplementable}. Для $p = 1$ существует явная формула для решения \eqref{prox_step}, для $p = 2, 3$ сложность \eqref{prox_step} такая же (с точностью до логарифмического по $\varepsilon$ множителя), как у итерации метода Ньютона \cite{NesterovImplementable}. 

Важно отметить, что вспомогательную задачу \eqref{prox_step} не обязательно решать точно: достаточно \cite{Kamzolov2020,Kamzolov2020Hyperfast} найти точку $\tilde{y}_{k+1}$, удовлетворяющую
\begin{equation}
\label{inexact1}
    \left\|\nabla \widetilde{\Omega}^k(\tilde{y}_{k+1}) \right\| \le \frac{1}{4p(p+1)}\|\nabla F(\tilde{y}_{k+1})\|.
\end{equation}
Такая модификация приведет лишь к появлению множителя $12/5$ в правой части \eqref{speedCATD}.

Будем говорить, что функция $F$ является $r$-равномерно выпуклой\\ (\dk{$p+1 \geq r \geq 2$}) с константой $\sigma_r > 0$, если
\begin{equation}\label{unif_conv}
    F(y)\geq F(x) + \la \nabla F(x), y-x \ra + \frac{\sigma_r}{r} \|y-x\|^r, \quad  x,y \in \R^d.
\end{equation}

В этом случае, используя \cite{Grapiglia2019}
\begin{equation}\label{inexact2}
  F(\tilde{y}_{k+1}) -   F(x_*) \le \frac{r-1}{r}\left(\frac{1}{\sigma_r}\right)^{\frac{1}{r-1}} \|\nabla F(\tilde{y}_{k+1})\|^{\frac{r}{r-1}},
\end{equation}
можно завязать критерий \eqref{inexact1} на желаемую точностью (по функции) решения исходной задачи $\varepsilon$ \cite{Kamzolov2020}:  $\left\|\nabla \widetilde{\Omega}^k(\tilde{y}_{k+1}) \right\|  = O\left(\left(\epsilon^{r-1}\sigma_r\right)^{\frac{1}{r}}\right)$. 

Более того, для $p = 1$ приведенные здесь выкладки можно уточнить, подчеркнув, тем самым, что сложность решения вспомогательной задачи может даже не зависеть от $\varepsilon$. Оказывается (см. \cite{Ivanova2019}), что условие
\begin{equation}
\label{inexact3}
\|\tilde{y}_{k+1} - y^*_{k+1}\| \le   \frac{H}{3H + 2L^g_1} \|\tilde{x}_{k} - y^*_{k+1}\|,
\end{equation}
где $y^*_{k+1}$ -- точное решение задачи \eqref{prox_step}, а $L^g_1$ -- константа Липшица градиента $\nabla g$, в теоретическом плане гарантирует то же, что и условие \eqref{inexact1} при $p=1$. А именно, Теорема~\ref{theoremCATD} останется верной с добавлением в правую часть \eqref{speedCATD} множителя $12/5$. 

Отметим, что оценка скорости сходимости
$\eqref{speedCATD}$ с точностью до числового множителя $c_p$ не может быть улучшена для класса выпуклых задач \eqref{eq1} с Липшицевой $p$-й производной и для широкого класса тензорных методов порядка $p$, описанном в \cite{NesterovImplementable}. 
При дополнительном предположении равномерной выпуклости $F$
оптимальный метод можно построить на базе УМ с помощью процедуры рестартов \cite{Kamzolov2020} -- см. Алгоритм~\ref{alg:restarts}.

\begin{algorithm} [h!]
\caption{Рестартованный УМ($x_0$,$f$,$g$,$p$,$r$,$\sigma_r$,$H$,$K$)}\label{alg:restarts}
	\begin{algorithmic}[1]
		\STATE \textbf{Input:} $r$-равномерно выпуклая функция $F = f + g : \R^d \rightarrow \R$ с константой $\sigma_r$ и УМ($x_0$,$f$,$g$,$p$,$H$,$K$).
		\STATE $z_0=x_0$.
		\FOR{$k = 0 $ \TO $K$}
		\STATE $R_k=R_0\cdot 2^{-k}$, 
		\begin{equation}
		\label{numberofrestarts}    
		N_k=\max \left\{ \left\lceil \left(\frac{r c_p H 2^r}{\sigma_r} R_k^{p+1-r}\right)^{\frac{2}{3p+1}}  \right\rceil, 1\right\}.
		\end{equation}
		\STATE  $z_{k+1} := y_{N_k}$, где $y_{N_k}$ -- выход УМ($z_k$,$f$,$g$,$p$,$H$,$N_k$).
		\ENDFOR
		\RETURN $z_{K}$ 
	\end{algorithmic}	
\end{algorithm}
\begin{theorem} \label{theoremRestartCATD}
Пусть $y_k$ -- выход Алгоритма~\ref{alg:restarts}  после $k$ итераций. Тогда если $H\ge (p+1)L_{p,f}$, $\sigma_r > 0$, то общее число вычислений \eqref{prox_step} для достижения $F(y_k) - F(x_*) \le \varepsilon$ будет:
\begin{equation}
    N = \tilde{O} \left(\left( \frac{H R^{p+1-r}}{\sigma_r} \right)^{\frac{2}{3p+1}}\right),
\end{equation}
где $\tilde{O}(\,)$ -- означает то же самое, что $O(\,)$ с точностью до множителя
$\ln\left({\varepsilon^{-1}}\right)$.
\end{theorem}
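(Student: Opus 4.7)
Основная идея --- по индукции по номеру рестарта $k$ установить геометрическое сокращение расстояния
\[
\|z_k - x_*\| \le R_k = R_0 \cdot 2^{-k}.
\]
База $\|z_0 - x_*\| \le R_0$ верна по выбору $R_0 \ge \|x_0 - x_*\|$. На шаге индукции я применяю Теорему~\ref{theoremCATD} к вызову УМ($z_k,f,g,p,H,N_k$), и в силу $\|z_k - x_*\| \le R_k$ это даёт
\[
F(z_{k+1}) - F(x_*) \le \frac{c_p H R_k^{p+1}}{N_k^{(3p+1)/2}}.
\]

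Далее ключевой переход --- превращение оценки по функции в оценку по норме посредством $r$-равномерной выпуклости. Подставив $x = x_*$ в \eqref{unif_conv} и учитывая $\nabla F(x_*) = 0$, получаю $\frac{\sigma_r}{r}\|z_{k+1} - x_*\|^r \le F(z_{k+1}) - F(x_*)$. Отсюда требование $\|z_{k+1} - x_*\| \le R_{k+1} = R_k / 2$ сводится к неравенству
\[
\frac{c_p H R_k^{p+1}}{N_k^{(3p+1)/2}} \le \frac{\sigma_r R_k^r}{r \cdot 2^r},
\]
которое после разрешения относительно $N_k$ в точности совпадает с выбором \eqref{numberofrestarts} --- именно так и подобрано число внутренних итераций.

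Чтобы получить $F(z_K) - F(x_*) \le \varepsilon$, достаточно обеспечить $\frac{\sigma_r}{r} R_K^r \le \varepsilon$, откуда $K = O(\log(1/\varepsilon))$. Суммарное число вычислений \eqref{prox_step} оценивается как $\sum_{k=0}^{K-1} N_k$. Подставляя $R_k = R_0\cdot 2^{-k}$ в \eqref{numberofrestarts} и вынося общий множитель, имею
\[
\sum_{k=0}^{K-1} N_k \lesssim \left(\frac{H R^{p+1-r}}{\sigma_r}\right)^{\frac{2}{3p+1}} \sum_{k=0}^{K-1} 2^{-\frac{2k(p+1-r)}{3p+1}}.
\]
При $p+1 > r$ геометрический ряд сходится и даёт чистый $O(\cdot)$ от первого слагаемого; в граничном случае $p+1 = r$ сумма вырождается в $K = O(\log(1/\varepsilon))$, что и объясняет обозначение $\tilde O$. Ещё один логарифмический множитель привносит внутреннее (неточное) решение подзадачи \eqref{prox_step} --- см. последнюю часть Теоремы~\ref{theoremCATD}.

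Главный технический нюанс, требующий аккуратности, --- это отслеживание констант при работе с приближённым решением подзадачи (условия \eqref{inexact1}, \eqref{inexact3}): дополнительный множитель $12/5$ в правой части \eqref{speedCATD} нужно поглотить подстройкой численных коэффициентов $r$ и $2^r$ в \eqref{numberofrestarts}, иначе индуктивное удвоение $R_k \mapsto R_k/2$ может разрушиться. Как только этот момент проверен (и учтены эффекты от округления $\lceil \cdot \rceil$ и от порогового $\max\{\cdot,1\}$ в формуле для $N_k$), оставшаяся часть рассуждения --- чисто алгебраическая.
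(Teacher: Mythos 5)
Ваше рассуждение верно и по существу совпадает с доказательством из Приложения 2 статьи: та же цепочка ($r$-равномерная выпуклость переводит оценку по функции из Теоремы~\ref{theoremCATD} в оценку по расстоянию, выбор $N_k$ по \eqref{numberofrestarts} даёт $R_{k+1}\le R_k/2$, затем суммирование $\sum_k N_k$ как геометрической прогрессии с фактором $2^{-2(p+1-r)k/(3p+1)}$). Вы даже чуть полнее автора прописываете завершение (число рестартов $K=O(\log(1/\varepsilon))$ и граничный случай $p+1=r$), так что добавлять нечего.
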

Все что было сказано после Теоремы~\ref{theoremCATD} можно отметить и в данном случае.

\section{Приложения}\label{section_3} 

\subsection{Ускоренные методы композитной оптимизации}\label{3.1} Если не думать о сложности решения подзадачи \eqref{prox_step}, например, считать, что $g$ какая-то простая функция и \eqref{prox_step} решается по явным формулам (как, например, для задачи LASSO), \ag{то} 
УМ описывает класс ускоренных методов (1, 2, 3, ... порядка) композитной оптимизации \cite{Gasnikov2018, NesterovLectures, Gasnikov2019}. При этом функция $g$ не обязана быть гладкой. В общем случае в строчке 5 Алгоритма~\ref{alg:highorder} под $\nabla g(y_{k+1})$ следует понимать такой субградиент функции $g$ в точке $y_{k+1}$, с которым субградиент правой части \eqref{prox_step} равен (близок) к нулю (немного переписав метод, от последнего ограничения можно отказаться). Отметим, что при $p = 1$ необходимость в поиске параметра $\lambda_{k+1}$ исчезает, что делает метод заметно проще.

\subsection{Ускоренные проксимальные методы. Каталист}\label{3.2} Если, считать $p = 1$, a $f\equiv 0$, $H > 0$ то получится ускоренный проксимальный метод. Отличительная особенность такого метода (см. также \cite{Ivanova2019}) от других известных ускоренных проксимальных методов заключается в том, что не требуется очень точно решать вспомогательную задачу. Критерий \eqref{inexact3} и сильная (2-равномерная) выпуклость вспомогательной подзадачи \eqref{prox_step} указывают на то, что сложность решения \eqref{inexact3} может не зависеть от желаемой точности решения исходной задачи $\varepsilon$. Таким образом, не теряется логарифмический множитель при использовании такой проксимальной оболочки для ускорения различных неускоренных процедур. Собственно, последнее направление получило название Каталист \cite{Catalyst}. До настоящего момента идея (Каталист) использования ускоренной проксимальной оболочки для `обертывания' неускоренных методов, решающих вспомогательную задачу \eqref{prox_step} на каждой итерации (при должном выборе параметра $H$), являлась наиболее общей идеей разработки ускоренных методов для разных задач. Мы получаем Каталист просто как частный случай УМ. Примемеры использования Каталист будут приведены в п.~\ref{3.4}.

\subsection{Разделение оракульных сложностей}
\label{3.3} Если считать, что для $g$ имеем $L_{p,g} < \infty$ (см. \eqref{def_lipshitz}) и  на вспомогательную задачу \eqref{prox_step} смотреть как на равномерно выпуклую достаточно гладкую задачу (с $f: = g$, $g(x) := 
\Omega_p\left(f,\tilde{x}_k;x\right) + 
\frac{(p+1)L_{p,f}}{\left( p+1\right)!}
\left\| x - \tilde{x}_k \right\|^{p+1}$), то для решения \eqref{prox_step}, в свою очередь, можно использовать Рестартованный УМ c $H \simeq (p+1)L_{p,g}$. В случае, когда $L_{p,f} \le L_{p,g}$ удается получить такие оценки сложности \cite{Kamzolov2020, Lan2019} (см. Теорему~\ref{theoremCATD}):
\begin{center}
$N_f = \tilde{O} \left(\left( \frac{L_{p,f} R^{p+1}}{\varepsilon} \right)^{\frac{2}{3p+1}}\right)$ -- число вызовов оракула для функции $f$,
\end{center}
\begin{center}
$N_g = \tilde{O} \left(\left( \frac{L_{p,g} R^{p+1}}{\varepsilon} \right)^{\frac{2}{3p+1}}\right)$ -- число вызовов оракула для функции $g$.
\end{center}
Вызов оракула подразумевает вычисление (старших) производных до порядка $p$ включительно. Таким образом, число вызовов оракула для каждой из функции $f$, $g$ является квазиоптимальным, т.е. оптимальным с точностью до логарифмического (от желаемой точности по функции) множителя. Аналогичные оценки можно получить и в $r$-равномерно ($r \ge 2$) выпуклом случае, см. п.~\ref{3.4}.

Заметим, что при $p = 1$ внутреннюю задачу \eqref{prox_step} не обязательно решать Рестартованным УМ. Можно использовать (ускоренные) покомпонентные и безградиентные методы, методы редукции дисперсии \cite{Gasnikov2018, Lan2019, Dvurechensky2017}. Причем ускорение можно получить из базовых неускоренных вариантов этих методов с помощью 
УМ (см. п.~\ref{3.2}). По сравнению с оболочкой, использованной в \cite{Ivanova2020}, УМ дает оценку сложности на логарифмический множитель лучше.
Это следует из теоретического анализа и было подтверждено в экспериментах \cite{code}. 

\subsection{Ускоренные методы для седловых задач} \label{3.4} Следуя, например \cite{Alkousa2019, Lin2020}, рассмотрим  выпукло-вогнутую седловую задачу
\begin{equation}\label{eq:3F}
\min _{x \in  \R^{d_x}} \{ F(x):=f(x)+\underbrace{\max _{y \in \R^{d_y}}\{G(x, y)-h(y)\}}_{g(x)=G(x, y^*(x))-h(y^*(x))} \;\; \} ,
\end{equation}
где $y^*(x) = \arg\max_{y \in \R^{d_y}}\{G(x, y)-h(y)\}$. Будем считать, что $\nabla f, \nabla G, \nabla h$ являются, соответственно, $L_f,L_G,L_h$-Липшицевыми. 
Также будем считать, что $f(x) + G(x,y)$ -- является $\mu_x$-сильно (2-равномерно) выпуклой по $x$, а $G(x, y)-h(y)$
-- $\mu_y$-сильно (2-равномерно) вогнутой по $y$. Тогда $F(x)$ будет $\mu_x$-сильно выпуклой, а $\nabla g$ будет $L_g = \left(L_{G} + 2L_G^2/\mu_y\right)$-Липшицевым \cite{Alkousa2019, Lin2020}. 

Если считать, что доступен $\nabla g$, то внешнюю задачу \eqref{eq:3F} можно решать ускоренным слайдингом (например, в варианте УМ с $p = 1$, см. п.~\ref{3.3})  за 
$\tilde{O}\left(\sqrt{L_f/\mu_x}\right)$ вычислений $\nabla f$ и $\tilde{O}\left(\sqrt{L_g/\mu_x}\right)$ вычислений $\nabla g$.  

Чтобы приближенно посчитать $\nabla g(x) = \nabla_x G(x,y^*(x))$ надо решить (с достаточной точностью) вспомогательную задачу в \eqref{eq:3F}, т.е. найти с нужной точностью $y^*(x)$. Это, в свою очередь, также можно сделать с помощью слайдинга (УМ с $p = 1$) за 
$\tilde{O}\left(\sqrt{L_h/\mu_y}\right)$ вычислений $\nabla h$ и $\tilde{O}\left(\sqrt{L_G/\mu_y}\right)$ вычислений $\nabla_y G$.

Резюмируя написанное, получаем, что исходную задачу \eqref{eq:3F} можно решить за $\tilde{O}\left(\sqrt{L_f/\mu_x}\right)$ вычислений $\nabla f$, $\tilde{O}\left(\sqrt{L_g/\mu_x}\right) \simeq \tilde{O}\left(\sqrt{L_G^2/(\mu_x\mu_y)}\right)$ вычислений $\nabla_x G$, $\tilde{O}\left(\sqrt{L_G^3/(\mu_x\mu_y^2)}\right)$ вычислений $\nabla_y G$, $\tilde{O}\left(\sqrt{L_h L_G^2/(\mu_x\mu_y^2)}\right)$ вычислений $\nabla h$. Поменяв порядок взятия $\min$ и $\max$ аналогичным образом можно прийти к оценкам $\tilde{O}\left(\sqrt{L_h/\mu_y}\right)$ вычислений $\nabla h$, $\tilde{O}\left(\sqrt{L_G^2/(\mu_x\mu_y)}\right)$ вычислений $\nabla_y G$, $\tilde{O}\left(\sqrt{L_G^3/(\mu_x^2\mu_y)}\right)$ вычислений $\nabla_x G$, $\tilde{O}\left(\sqrt{L_f L_G^2/(\mu_x^2\mu_y)}\right)$ вычислений $\nabla f$.

Оценки полученные на число вычислений $\nabla_x G$ и  $\nabla f$ в последнем случае не являются оптимальными \cite{Lin2020}. Чтобы улучшить данные оценки (сделать их оптимальными с точностью до логарифмических множителей \cite{Lin2020}) воспользуемся Каталистом, см. п.~\ref{3.2} (УМ, с $p = 1$, $H \gg \mu_x$, $f \equiv 0$, $g = F$, где $F$ определяется \eqref{eq:3F}). Если параметр метода $H$, то число итераций метода будет $\tilde{O}\left(\sqrt{H/\mu_x}\right)$, см. теорему~\ref{theoremRestartCATD}. На каждой итерации необходимо будет решать с должной точностью задачу вида \eqref{eq:3F}, в которой $L_f := L_f + H$, $\mu_x := \mu_x + H \simeq H$. Таким образом, для решения внутренней седловой задачи потребуется $\tilde{O}\left(\sqrt{L_h/\mu_y}\right)$ вычислений $\nabla h$, $\tilde{O}\left(\sqrt{L_G^2/(H\mu_y)}\right)$ вычислений $\nabla_y G$, $\tilde{O}\left(\sqrt{L_G^3/(H^2\mu_y)}\right)$ вычислений $\nabla_x G$, $\tilde{O}\left(\sqrt{(L_f + H) L_G^2/(H^2\mu_y)}\right)$ вычислений $\nabla f$. Считая для наглядности $L_f \ge L_G$ выберем $H = L_G$. Тогда итоговые оценки на число вычислений соответствующих градиентов будут такие: $\tilde{O}\left(\sqrt{L_h L_G/(\mu_x\mu_y)}\right)$ вычислений $\nabla h$, $\tilde{O}\left(\sqrt{L_G^2/(\mu_x\mu_y)}\right)$ вычислений $\nabla_y G$, $\tilde{O}\left(\sqrt{L_G^2/(\mu_x\mu_y)}\right)$ вычислений $\nabla_x G$, $\tilde{O}\left(\sqrt{L_f L_G/(\mu_x\mu_y)}\right)$ вычислений $\nabla f$.

За счет использования 
УМ приведенная выше схема улучшает похожую схему рассуждений из \cite{Lin2020} на логарифмический (по желаемой точности решения задачи) множитель, и обобщает ее на случай отличных от тождественного нуля функций $f$ и $h$.

Приведенная здесь схема рассуждений наглядно демонстрирует, как из одной универсальной схемы ускорения удается получить (`собрать как в конструкторе') 
оптимальный метод с точностью до логарифмического (по желаемой точности множителя (при $f \equiv 0$ и $h \equiv 0$ -- только при этих условиях известны нижние оценки \cite{Lin2020}).

\subsection{Сравнение с алгоритмом Монтейро--Свайтера}

Следуя \cite{Spokoiny2019}, рассмотрим следующую задачу оптимизации:
\begin{equation}
\min_{x \in \mathbb{R}^n} \{ F(x) := \underbrace{\log\left(\sum_{k=1}^p \exp \left(\langle A_k, x \rangle\right)\right)}_{= f(x)} + \underbrace{\frac{1}{2} \|G x\|_2^2}_{= g(x)}\;\; \},
\end{equation}
где $n=500$, $p=20000$, $A$ – разреженная $p \times n$ матрица с коэффициентом разреженности $0.001$ (под коэффициентом разреженности в данном случае понимается отношение числа ненулевых элементов матрицы к общему числу ее элементов), чьи ненулевые элементы есть независимые одинаково распределенные случайные величины из равномерного распределения $\mathcal{U}(-1, 1)$, а матрица $G^2$ получается из следующего выражения:
$$
G^2 = \sum_{i=1}^n \lambda_i \tilde{e}_i^\top \tilde{e}_i, 
$$
где $\sum_{i=1}^n \lambda_i = 1$ и $\left[\tilde{e}_i\right]_j \sim \mathcal{U}(1, 2)$ для каждой пары $i, j$.

Здесь $f$ имеет липшицев градиент с константой Липшица
$$
L_f = \max_{i = 1,...,n} \|A^{\langle k\rangle}\|_2^2,
$$
где за $A^{\langle k\rangle}$ обозначен $k$-й столбец матрицы $A$. 

На примере данной задачи сравним работу ускоренных методов, полученных с помощью алгоритма Монтейро--Свайтера \cite{Monteiro2013} ($L = 20 L_f$) и с помощью 
УМ ($H = L_f$) при использовании для решения вспомогательной задачи покомпонентного градиентного метода Нестерова (ПГМ) \cite{Nesterov2017} ($\beta=1/2$).

На рис. \ref{fig:ms-3d} покомпонентный метод, ускоренный с помощью алгоритма Монтейро--Свайтера, сравнивается с методом, ускоренным оболочкой УМ с различным числом итераций метода для решении вспомогательной задачи ($K_{\text{внутр}} = k$ соответствует $kn$ итерациям покомпонентного метода), и быстрым градиентным методом (БГМ). Представлен трехмерный график зависимости величины $(F(x_k) - F(x^*)) / (F(x_0) - F(x^*))$ (в log масштабе, $F(x^*)$ выбирается равным значению $F$ в точке, полученной после $25000$ итераций БГМ) от числа вычислений компонент градиентов $\nabla f_i$ и $\nabla g_i$, а также его двухмерные проекции. Так как некоторые методы требуют вычисления полного значения градиента ($\nabla f$ или $\nabla g$), обращения к оракулам, в таком случае, учитываются с весом $t_1 / t_2 \approx 2.5$, где $t_1$~--- среднее время вычисления полного градиента, $t_2$~--- среднее время вычисления одной компоненты. Как можно видеть из графиков, число обращений к оракулу $\nabla f_i$ ускоренного с помощью оболочки УМ метода меньше, чем у быстрого градиентного метода. Кроме того, оболочка УМ позволяет значительно сократить число обращений к оракулу $\nabla g_i$ по сравнению с алгоритмом Монтейро--Свайтера.

\begin{figure}[ht!]
    \centering
    \includegraphics[width=\textwidth]{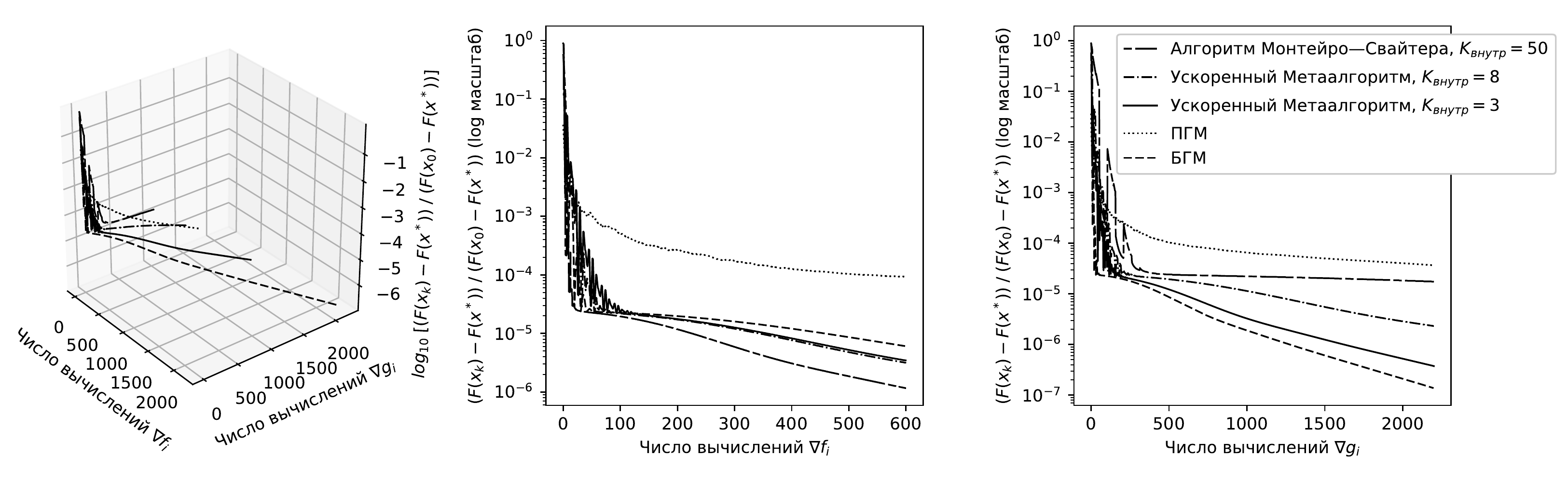}
    \vspace{-2em}
    \caption{График зависимости величины $(F(x_k) - F(x^*)) / (F(x_0) - F(x^*))$ (в log масштабе) от числа вычислений компонент градиентов $\nabla f_i$ и $\nabla g_i$. Двухмерные проекции}
    \label{fig:ms-3d}
\end{figure}

На рис. \ref{fig:ms} сравнивается работа методов в зависимости от времени работы и числа итераций внутреннего метода. Как можно видеть из графика \ref{fig:ms-time}, ускоренный с помощью оболочки УМ метод сходится по времени работы с большей скоростью, чем метод, ускоренный с помощью алгоритма Монтейро--Свайтера, а также с большей скоростью, чем быстрый градиентный метод.

\begin{figure}[ht]
  \vspace{-6em}   
  \subfloat{
	\begin{minipage}[c][1\width]{
	   0.5\textwidth}
	   \centering
	   \includegraphics[width=\textwidth]{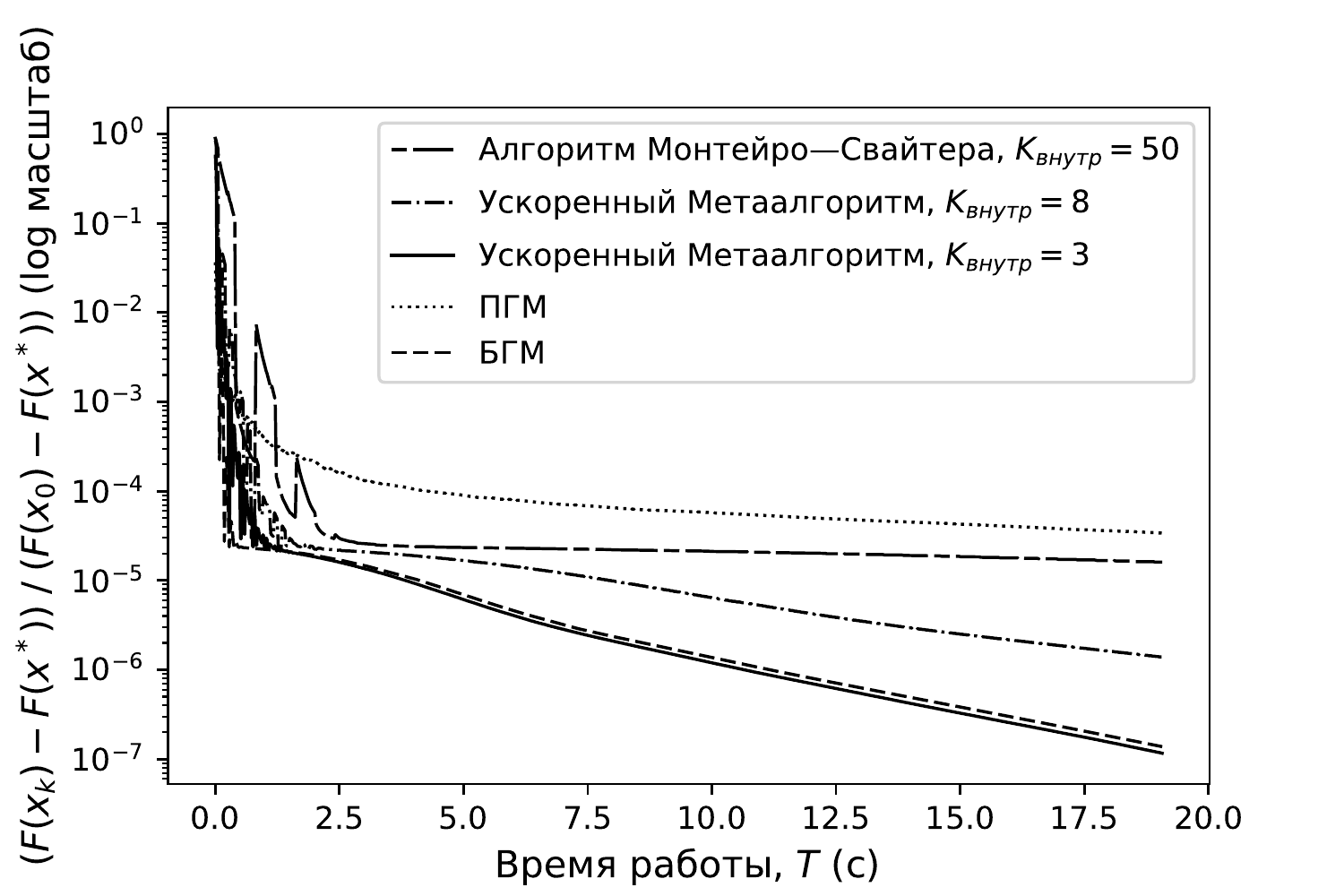}
        \vspace{-6em}
        \label{fig:ms-time}
	\end{minipage}}
 \hfill
  \subfloat{
	\begin{minipage}[c][1\width]{
	   0.5\textwidth}
	   \centering
	   \includegraphics[width=\textwidth]{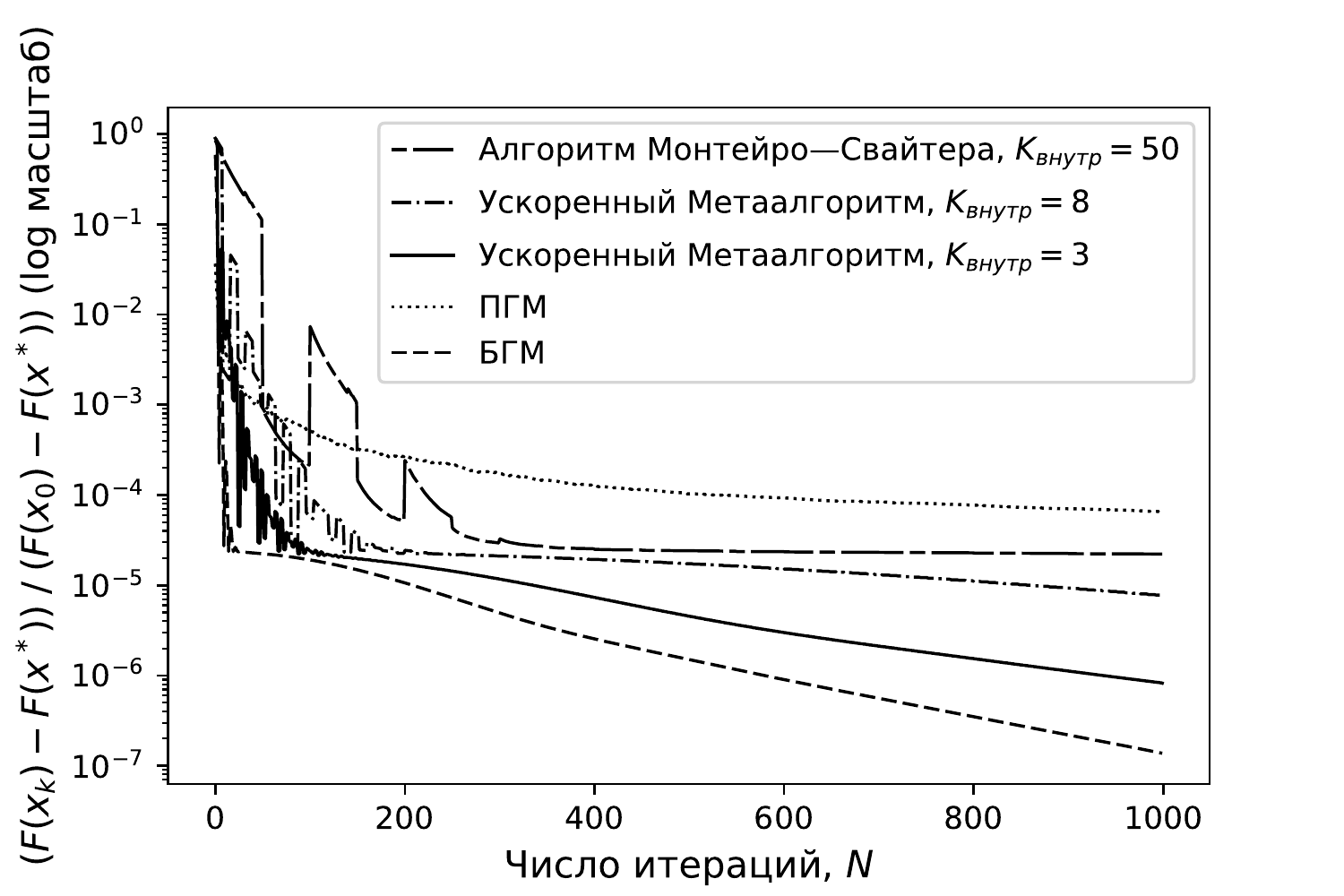}
        \vspace{-6em}
       \label{fig:ms-iters}
	\end{minipage}}
\caption{Графики зависимости величины $(F(x_k) - F(x^*)) / (F(x_0) - F(x^*))$ (в log масштабе) от времени работы и числа итераций внутреннего метода}
\label{fig:ms}
\end{figure}

\section{Возможные обобщения}\label{section_4}
Приводимые выше конструкции существенным образом базируются на том, что рассматриваются задачи безусловной оптимизации 
и используется евклидова норма. На данный момент открытым остается вопрос о перенесении  приведенных в статье результатов на задачи безусловной оптимизации с заменой евклиловой нормы на дивергенцию Брэгмана \cite{Gasnikov2018, Doikov2019}. Тем более открытым остается вопрос об использовании других (более общих) моделей в построении мажоратны целевой функции \eqref{eq_sumup} \cite{Gasnikov2018}. 

В работе \cite{Dvinskikh2020} было подмечено (в том числе и в модельной общности), что для получения оптимальных версий ускоренных алгоритмов для задач стохастической оптимизации нужно уметь оценивать как накапливается малый шум в градиенте в таких методах. Таким образом строятся ускоренные стохастические градиентные методы на базе ускоренных не стохастических (детерминированных) и конструкции, названной минибатчингом (замена градиента в детерминированном методе его оценкой, построеной на базе стохастических градиентов). Насколько нам известно, для тензорных методов вопрос построения ускоренных методов для задач стохастической оптимизации остается открытым. В частности, не известен ответ на такой вопрос: верно ли, что для задач сильно выпуклой стохастической оптимизации требования к точности аппроксимации старших производных с помощью минибатчинга снижаются по мере роста порядка производных, как это имеет место в невыпуклом случае \cite{Lucchi2019}? Для ответа на этот вопрос для тензорных методов также как и для градиентных $p = 1$ может пригодиться анализ чувствительности исследуемых методов к неточности в вычислении производных. Некоторый задел в этом направлении уже имеется \cite{Baes2009}. В частности, при $p = 1$ 
УМ 
демонстрирует стандартное для ускоренных методов накопление неточностей в градиенте \cite{Dvinskikh2020}.

Из статьи может показаться, что для безусловных достаточно гладких задач выпуклой оптимизации предлагаемый в статье подход дает возможность всегда строить <<оптимальные>> методы. На самом деле это не совсем так. Во-первых, построение оптимальных методов даже на базе 
\ag{одного только}
УМ может быть совсем не простой задачей, как показывает пример из п.~\ref{3.4}. 
Во-вторых, оговорка <<с точностью до логарифмических множителей>> весьма существенна. В частности, до сих пор остается открытым вопрос о том, устраним ли логарифмический мультипликативный зазор (по желаемой точности решения задачи по функции) между нижними оценками и тем, что дает УМ и другие ускоренные тензорные методы ($p \ge 2$), см. Теорему~\ref{theoremCATD}. В-третьих, упомянутые нижние оценки были получены для класса Крыловских методов (для тензорных методов чуть хитрее \cite{NesterovImplementable}), однако предлагаемая оболочка УМ в некоторых вариантах ее использования, в том числе в проксимальном варианте (Каталист), см. п.~\ref{3.2}, выводит из класса допустимых методов, для которого были полученые нижние оценки.

Работа А.В.~Гасникова поддержана грантом РФФИ 18-31-20005 мол\_а\_вед в п. 2, работа Д.И.~Камзолова поддержана грантом РФФИ 19-31-90170 Аспиранты в п. 3, работа П.Е.~Двуреченского поддержана грантом РФФИ 18-29-03071 мк в п. 3. Работа Д.М. Двинских и В.В. Матюхина была выполнена при поддержке Министерства науки и высшего образования Российской Федерации (госзадание) №075-00337-20-03, номер проекта 0714-2020-0005.

\newpage

\section*{{\,\,\,\,\,\,\,\,\,\,\,\,\,\,\,\,\,\,\,\,\,\,\,\,\,\,\,\,\,\,\,\,\,\,\,\,\,\,\,\,\,\,\,\,\itПриложение 1} } \label{sec:apCATD}

В этом приложении представлено доказательство Теоремы \ref{theoremCATD}, основанное на доказательстве из статьи \cite{Bubeck2020}, с учетом добавления композитной функции. Следующая теорема базируется на Теореме 2.1 из \cite{Bubeck2020}
\begin{theorem} \label{thm:MS}
Пусть $(y_k)_{k \geq 1}$~--- это последовательность точек в $\R^d$, и  $(\lambda_k)_{k \geq 1}$~--- это последовательность в $\R_+$. Определим $(a_k)_{k \geq 1}$ такой, что $\lambda_k A_k = a_k^2$ и $A_k = \sum_{i=1}^k a_i$. Для любого $k\geq 0$ определим $x_k = x_0 - \sum_{i=1}^k a_i (\nabla f(y_i)+g'(y_i))$  и $\tilde{x}_k := \frac{a_{k+1}}{A_{k+1}} x_{k} + \frac{A_k}{A_{k+1}} y_k$. Также предположим, что если для некоторого $\sigma \in [0,1]$
\begin{equation} \label{eq:igdrefined}
\|y_{k+1} - (\tilde{x}_k - \lambda_{k+1} \nabla f(y_{k+1}))\| \leq \sigma \cdot \|y_{k+1} - \tilde{x}_k\| \,,
\end{equation}
тогда для любого $x \in \R^d$ верны неравенства:
\begin{equation} \label{eq:rate}
F(y_k) - F(x) \leq \frac{2 \|x\|^2}{\left(\sum_{i=1}^k \sqrt{\lambda_i} \right)^2}  \,,
\end{equation}
и
\begin{equation} \label{eq:Alambdatradeoff}
\sum_{i=1}^k \frac{A_i}{\lambda_i} \|y_i - \tilde{x}_{i-1}\|^2 \leq \frac{\|x^*\|^2}{1-\sigma^2} \,.
\end{equation}
\end{theorem}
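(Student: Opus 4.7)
The plan is to follow the estimating-sequence / potential-function approach of Bubeck--Jiang--Lee--Li--Sidford~\cite{Bubeck2020}, adapted to the composite setting by bookkeeping $g'(y_k)$ alongside $\nabla f(y_k)$. The natural candidate is
\[
\Psi_k(x) := A_k\bigl(F(y_k) - F(x)\bigr) + \tfrac{1}{2}\|x - x_k\|^2, \qquad \Psi_0(x) = \tfrac{1}{2}\|x - x_0\|^2,
\]
and the goal reduces to a single Lyapunov-type inequality: $\Psi_k(x) + \tfrac{1-\sigma^2}{2}\sum_{i=1}^{k}\tfrac{A_i}{\lambda_i}\|y_i-\tilde{x}_{i-1}\|^2 \le \Psi_0(x)$ for every $x \in \R^d$. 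From this, \eqref{eq:rate} follows by dropping the sum and using a lower bound on $A_k$, while \eqref{eq:Alambdatradeoff} follows from $x = x^*$ and the nonnegativity of $A_k(F(y_k)-F(x^*))$.

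First, I would compute $\Psi_k - \Psi_{k-1}$. Writing $\nabla F(y_k) := \nabla f(y_k) + g'(y_k)$, the identity $x_k = x_{k-1} - a_k \nabla F(y_k)$ yields $\tfrac{1}{2}\bigl(\|x-x_k\|^2 - \|x-x_{k-1}\|^2\bigr) = a_k\langle \nabla F(y_k), x - x_{k-1}\rangle + \tfrac{a_k^2}{2}\|\nabla F(y_k)\|^2$. For the functional part I would apply convexity of $F$ twice (to $y_{k-1}$ and to $x$), obtaining
\[
A_{k-1}\bigl(F(y_k)-F(y_{k-1})\bigr) + a_k\bigl(F(y_k)-F(x)\bigr) \le \bigl\langle \nabla F(y_k),\, A_k y_k - A_{k-1}y_{k-1} - a_k x\bigr\rangle,
\]
and then use the defining identity $A_{k-1}y_{k-1} = A_k\tilde{x}_{k-1} - a_k x_{k-1}$ to rewrite the bracket as $A_k(y_k - \tilde{x}_{k-1}) + a_k(x_{k-1} - x)$. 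When the two contributions are summed, the terms linear in $x$ cancel exactly, leaving the clean identity
\[
\Psi_k - \Psi_{k-1} \le A_k\langle \nabla F(y_k),\, y_k - \tilde{x}_{k-1}\rangle + \tfrac{a_k^2}{2}\|\nabla F(y_k)\|^2.
\]

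The decisive step is to apply \eqref{eq:igdrefined}, which I would read in its natural composite form (with $\nabla f$ replaced by the composite gradient $\nabla F$); squaring and rearranging gives
\[
\langle \nabla F(y_k),\, y_k-\tilde{x}_{k-1}\rangle \le -\tfrac{1-\sigma^2}{2\lambda_k}\|y_k-\tilde{x}_{k-1}\|^2 - \tfrac{\lambda_k}{2}\|\nabla F(y_k)\|^2.
\]
Multiplying by $A_k$ and invoking the Monteiro--Svaiter coupling $\lambda_k A_k = a_k^2$, the quadratic term on the right becomes exactly $-\tfrac{a_k^2}{2}\|\nabla F(y_k)\|^2$ and cancels the positive $\tfrac{a_k^2}{2}\|\nabla F(y_k)\|^2$ coming from the increment identity. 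This cancellation is the heart of the argument and is precisely what the gauge $\lambda_k A_k = a_k^2$ is designed to buy. Telescoping the resulting bound $\Psi_k - \Psi_{k-1} \le -\tfrac{(1-\sigma^2)A_k}{2\lambda_k}\|y_k-\tilde{x}_{k-1}\|^2$ from $1$ to $k$ gives the target Lyapunov inequality.

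To convert the Lyapunov bound into \eqref{eq:rate}, I would use the elementary estimate $A_k \ge \tfrac{1}{4}\bigl(\sum_{i=1}^k\sqrt{\lambda_i}\bigr)^2$, which follows from $a_k = \sqrt{\lambda_k A_k}$ combined with $\sqrt{A_k} - \sqrt{A_{k-1}} = a_k / (\sqrt{A_k}+\sqrt{A_{k-1}}) \ge \tfrac{1}{2}\sqrt{\lambda_k}$ and telescoping; \eqref{eq:Alambdatradeoff} is immediate from the Lyapunov inequality at $x = x^*$. The main obstacle I anticipate is the composite bookkeeping: \eqref{eq:igdrefined} is stated only with $\nabla f$, so one must verify that in the composite variant of the scheme the optimality condition for the proximal subproblem \eqref{prox_step} furnishes the additional $g'(y_k)$ term needed to upgrade \eqref{eq:igdrefined} to the full $\nabla F$ without loss in the constants $\sigma$ and $\lambda_k$. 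Modulo that verification, the argument above delivers both \eqref{eq:rate} and \eqref{eq:Alambdatradeoff} in a single potential-function sweep.
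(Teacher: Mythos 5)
Your proof is correct and follows essentially the same route as the paper: your potential $\Psi_k(x)=A_k\bigl(F(y_k)-F(x)\bigr)+\tfrac12\|x-x_k\|^2$ is just the estimating-sequence quantity $\psi_k(x_k)-A_kF(y_k)$ of Lemmas \ref{lem:basic1}--\ref{lem:basic4} rewritten (via $\psi_k(x)=\psi_k(x_k)+\tfrac12\|x-x_k\|^2$ and $\psi_k(x)\le A_kF(x)+\tfrac12\|x-x_0\|^2$), and your key step --- squaring \eqref{eq:igdrefined}, cancelling $\tfrac{a_k^2}{2}\|\nabla F(y_k)\|^2$ through $\lambda_kA_k=a_k^2$, and invoking $A_k\ge\tfrac14\bigl(\sum_{i}\sqrt{\lambda_i}\bigr)^2$ --- is exactly the content of Lemma \ref{lem:basic4} combined with Lemmas 2.5 and 3.3 of \cite{Bubeck2020}. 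Your composite reading of \eqref{eq:igdrefined} with $\nabla f+g'$ is the one the paper itself uses (and verifies for the prox step in Lemma \ref{lem:controlstepsize}), and writing $\|x-x_0\|$ in place of $\|x\|$ agrees with the paper's closing remark on replacing $\|x^{\ast}\|$ by $\|x_0-x^{\ast}\|$.
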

Для доказательства этой теоремы мы введем дополнительные леммы, основанные на леммах 2.2-2.5 и 3.1 из \cite{Bubeck2020}, леммы 2.6 и 3.3 могут использоваться без изменений.

\begin{lemma} \label{lem:basic1}
Пусть $\psi_0(x) = \frac{1}{2} \|x-x_0\|^2$, и по индукции определим $\psi_{k}(x) = \psi_{k-1}(x) + a_{k} \Omega_1(F, y_{k}, x)$, тогда $x_k =x_0 - \sum_{i=1}^k a_i (\nabla f(y_i) + g'(y_i))$~--- это минимизатор функции $\psi_k$, и верно
$\psi_k(x) \leq A_k F(x) + \frac{1}{2} \|x-x_0\|^2$, где $A_k = \sum_{i=1}^k a_i$. 
\end{lemma}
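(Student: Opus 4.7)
План состоит в том, чтобы доказать оба утверждения совместно индукцией по $k$, пользуясь тем, что $\psi_k$ по построению является квадратичной функцией с фиксированным гессианом, равным единичной матрице, плюс линейное возмущение, накапливающееся из аффинных аппроксимаций Тейлора $\Omega_1(F, y_i, \cdot)$.

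В базовом случае $k = 0$ функция $\psi_0(x) = \tfrac{1}{2}\|x - x_0\|^2$ тривиально минимизируется в точке $x_0$, а при $A_0 = 0$ требуемое неравенство $\psi_0(x) \le A_0 F(x) + \tfrac{1}{2}\|x - x_0\|^2$ обращается в равенство. На индуктивном шаге, в предположении справедливости обоих утверждений для индекса $k-1$, я бы заметил, что $\Omega_1(F, y_k, x) = F(y_k) + \langle \nabla F(y_k), x - y_k \rangle$ аффинна по $x$, поэтому
\[
\nabla \psi_k(x) = \nabla \psi_{k-1}(x) + a_k \nabla F(y_k) = (x - x_0) + \sum_{i=1}^{k} a_i \nabla F(y_i).
\]
Приравнивая этот градиент к нулю и используя $\nabla F(y_i) = \nabla f(y_i) + g'(y_i)$, получаем в точности заявленную формулу $x_k = x_0 - \sum_{i=1}^k a_i(\nabla f(y_i) + g'(y_i))$, что согласуется с обновлением в строке~5 Алгоритма~\ref{alg:highorder}.

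Для доказательства оценки сверху ключевым ингредиентом служит выпуклость $F = f + g$: для любого $y$ и любого (суб)градиента $\nabla F(y) = \nabla f(y) + g'(y)$ справедливо $\Omega_1(F, y, x) \le F(x)$ при всех $x \in \R^d$. Комбинируя это с индукционным предположением, получаем
\[
\psi_k(x) = \psi_{k-1}(x) + a_k \Omega_1(F, y_k, x) \le A_{k-1} F(x) + \tfrac{1}{2}\|x - x_0\|^2 + a_k F(x) = A_k F(x) + \tfrac{1}{2}\|x - x_0\|^2,
\]
так как $A_k = A_{k-1} + a_k$, что и замыкает индукцию.

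Единственная по-настоящему нетривиальная тонкость, которую я бы аккуратно прояснил, -- это трактовка $g'(y_i)$ в случае, когда $g$ не всюду дифференцируема: $\nabla F(y_i)$ следует понимать как тот конкретный субградиент, выделенный в п.~\ref{3.1} (для которого выполнено условие оптимальности вспомогательной задачи \eqref{prox_step}). При такой интерпретации неравенство линейной оценки снизу $F(x) \ge F(y_k) + \langle \nabla F(y_k), x - y_k \rangle$ есть просто субградиентное неравенство для $F$, и как идентификация минимизатора, так и верхняя оценка на $\psi_k$ проходят без изменений.
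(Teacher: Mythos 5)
Ваше доказательство корректно и по существу совпадает со стандартным рассуждением (леммой 2.2 из \cite{Bubeck2020}), на которое ссылается статья, не приводя собственного доказательства: аффинность $\Omega_1(F,y_i,\cdot)$ даёт явную формулу для минимизатора квадратичной функции $\psi_k$, а выпуклость $F$ (субградиентное неравенство) даёт верхнюю оценку. Замечание о трактовке $g'(y_i)$ как конкретного субградиента уместно и не меняет аргумента.
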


\begin{lemma} \label{lem:basic2}
Пусть $z_k$ такая, что 
\begin{equation} \label{eq:tosatisfy}
\psi_k(x_k) - A_k F(z_k) \geq 0 \,.
\end{equation}
Тогда для любого $x$,
\begin{equation} \label{eq:tosatisfy2}
F(z_k) \leq F(x) + \frac{\|x-x_0\|^2}{2 A_k} \,.
\end{equation}
\end{lemma}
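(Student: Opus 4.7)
План доказательства представляет собой короткую цепочку из трёх неравенств, опирающихся на Лемму~\ref{lem:basic1}. Во-первых, я бы переписал условие \eqref{eq:tosatisfy} в виде $A_k F(z_k) \le \psi_k(x_k)$. Во-вторых, поскольку согласно Лемме~\ref{lem:basic1} точка $x_k$ является минимизатором функции $\psi_k$, для произвольного $x \in \R^d$ справедливо $\psi_k(x_k) \le \psi_k(x)$. В-третьих, та же лемма даёт мажоранту $\psi_k(x) \le A_k F(x) + \tfrac{1}{2}\|x - x_0\|^2$.

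Сцепляя эти три неравенства, я получу $A_k F(z_k) \le A_k F(x) + \tfrac{1}{2}\|x - x_0\|^2$, после чего деление на $A_k > 0$ (что законно при $k \ge 1$, так как $A_k = \sum_{i=1}^k a_i$ есть сумма положительных слагаемых) немедленно даёт искомое неравенство \eqref{eq:tosatisfy2}. Таким образом, по существу всё доказательство сводится к применению двух свойств функции $\psi_k$, установленных в Лемме~\ref{lem:basic1}: того, что её минимум достигается в явно выписанной точке $x_k$, и того, что она мажорируется сверху выражением $A_k F(\cdot) + \tfrac{1}{2} \|\cdot - x_0\|^2$.

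Сколько-нибудь серьёзных трудностей в самой Лемме~\ref{lem:basic2} я не ожидаю: её утверждение носит чисто формальный характер и играет роль одного из промежуточных шагов при выводе основной оценки скорости сходимости \eqref{eq:rate} из Теоремы~\ref{thm:MS}. Настоящая содержательная работа (для итогового приложения) возникнет позднее, а именно -- в проверке предпосылки \eqref{eq:tosatisfy} для выбора $z_k = y_k$ с использованием условия \eqref{eq:igdrefined} и структуры шага \eqref{prox_step} из Алгоритма~\ref{alg:highorder}; здесь же требуется лишь аккуратно скомбинировать уже готовые неравенства.
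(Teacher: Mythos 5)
Ваше доказательство совпадает с доказательством из статьи: та же цепочка $A_k F(z_k) \le \psi_k(x_k) \le \psi_k(x) \le A_k F(x) + \tfrac{1}{2}\|x-x_0\|^2$, опирающаяся на то, что по Лемме~\ref{lem:basic1} точка $x_k$ минимизирует $\psi_k$ и $\psi_k$ мажорируется сверху, после чего остаётся разделить на $A_k > 0$. Всё верно, существенных отличий от авторского рассуждения нет.
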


\begin{proof}
Из Леммы \ref{lem:basic1}) можно получить, что
\[
A_k F(z_k) \leq \psi_k(x_k) \leq \psi_k(x) \leq A_k F(x) + \frac{1}{2}\|x-x_0\|^2 \,.
\]
\end{proof}

\begin{lemma} \label{lem:basic3}
Для любого $x$ верно следующее неравенство
\begin{align*}
& \psi_{k+1}(x) - A_{k+1} F(y_{k+1}) - (\psi_k(x_k) - A_k F(z_k)) \\
& \geq A_{k+1} (\nabla f(y_{k+1}) +g'(y_{k+1}))\cdot \left(\frac{a_{k+1}}{A_{k+1}} x + \frac{A_k}{A_{k+1}} z_k - y_{k+1} \right ) + \frac{1}{2} \|x -x_k\|^2 \,.
\end{align*}
\end{lemma}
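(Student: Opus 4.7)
Первое ключевое наблюдение — квадратичный характер функции $\psi_k$ с единичной <<кривизной>>: по определению $\psi_0(x)=\tfrac{1}{2}\|x-x_0\|^2$, а все последующие добавки $a_i\Omega_1(F,y_i,x)$ аффинны по $x$. Отсюда сразу следует тождество
\[
\psi_k(x) = \psi_k(x_k) + \tfrac{1}{2}\|x-x_k\|^2,
\]
где $x_k$ — минимизатор из Леммы~\ref{lem:basic1}. Именно эта формула даст в правой части доказываемого неравенства квадратичный член $\tfrac{1}{2}\|x-x_k\|^2$ и позволит выделить $\psi_k(x_k)$.

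Далее я подставлю определение $\psi_{k+1}(x)=\psi_k(x)+a_{k+1}\Omega_1(F,y_{k+1},x)$, где
\[
\Omega_1(F,y_{k+1},x)=F(y_{k+1})+\la \nabla f(y_{k+1})+g'(y_{k+1}),\,x-y_{k+1}\ra.
\]
С учётом $A_{k+1}=A_k+a_{k+1}$ слагаемые с $F(y_{k+1})$ схлопываются в $-A_k F(y_{k+1})$, и интересующая нас разность
\[
\psi_{k+1}(x) - A_{k+1}F(y_{k+1}) - \bigl(\psi_k(x_k) - A_k F(z_k)\bigr)
\]
после прямого раскрытия перепишется в виде
\[
\tfrac{1}{2}\|x-x_k\|^2 + a_{k+1}\la \nabla f(y_{k+1})+g'(y_{k+1}),\,x-y_{k+1}\ra + A_k\bigl(F(z_k)-F(y_{k+1})\bigr).
\]

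Последний содержательный шаг — применение выпуклости $F=f+g$ к паре $(z_k,y_{k+1})$; неравенство
\[
F(z_k)\ge F(y_{k+1})+\la \nabla f(y_{k+1})+g'(y_{k+1}),\,z_k-y_{k+1}\ra
\]
корректно, поскольку $\nabla f(y_{k+1})+g'(y_{k+1})$ является субградиентом $F$ в точке $y_{k+1}$. Умножив его на $A_k$ и сгруппировав коэффициенты при $x$, $z_k$ и $y_{k+1}$ (с использованием $a_{k+1}+A_k=A_{k+1}$), я приду к правой части
\[
\tfrac{1}{2}\|x-x_k\|^2 + A_{k+1}\la \nabla f(y_{k+1})+g'(y_{k+1}),\,\tfrac{a_{k+1}}{A_{k+1}}x+\tfrac{A_k}{A_{k+1}}z_k-y_{k+1}\ra,
\]
что и требуется.

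Основная <<сложность>>, по существу, сугубо техническая — аккуратное раскрытие рекурсии $\psi_{k+1}$ и верная группировка членов с $F(y_{k+1})$; единственное содержательное неравенство — неравенство выпуклости, применённое естественным образом для замены $F(z_k)-F(y_{k+1})$ линейной оценкой через субградиент.
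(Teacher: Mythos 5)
Ваше рассуждение корректно и по существу совпадает с доказательством в статье: та же квадратичная структура $\psi_k(x)=\psi_k(x_k)+\tfrac12\|x-x_k\|^2$, то же единственное содержательное неравенство --- выпуклость $F$ (в статье записанная как $\Omega_1(F,y_{k+1},z_k)\le F(z_k)$, применённая к субградиенту $\nabla f(y_{k+1})+g'(y_{k+1})$), и та же перегруппировка с $a_{k+1}+A_k=A_{k+1}$. Различие лишь косметическое: статья проводит выкладку внутри обозначения $\Omega_1$ (через разложение $a_{k+1}\Omega_1=A_{k+1}\Omega_1-A_k\Omega_1$ и аффинность $\Omega_1$ по третьему аргументу), а вы раскрываете $\Omega_1$ явно и применяете выпуклость в конце.
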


\begin{proof}
Во-первых, простыми вычислениями получим
\[
\psi_k(x) = \psi_k(x_k) + \frac{1}{2} \|x- x_k\|^2,\] и \[ \psi_{k+1}(x) = \psi_k(x_k) + \frac{1}{2} \|x-x_k\|^2 + a_{k+1} \Omega_1(f, y_{k+1}, x) \,,
\]
таким образом
\begin{equation} \label{eq:ind1}
\psi_{k+1}(x) - \psi_k(x_k) = a_{k+1} \Omega_1(F, y_{k+1}, x) + \frac{1}{2} \|x-x_k\|^2 \,.
\end{equation}
Теперь мы хотим, чтобы $A_{k+1} F(z_{k+1}) - A_k F(z_k)$ было нижней оценкой неравенства \eqref{eq:ind1}, когда вычисляем $x=x_{k+1}$. 
Используя
 $\Omega_1(F, y_{k+1}, z_k) \leq f(z_k),$
 мы получаем:
\begin{eqnarray*}
& &a_{k+1}\Omega_1(F, y_{k+1}, x)  =  A_{k+1} \Omega_1(F, y_{k+1}, x) - A_k \Omega_1(F, y_{k+1}, x) \\
& = & A_{k+1} \Omega_1(F, y_{k+1}, x) - A_k \nabla F(y_{k+1}) \cdot (x - z_k) - A_k \Omega_1(F, y_{k+1}, z_k) \\
 & = & A_{k+1} \Omega_1\left(F, y_{k+1}, x - \frac{A_k}{A_{k+1}} (x - z_k) \right ) - A_k \Omega_1(F, y_{k+1}, z_k) \\
 & \geq & A_{k+1} F(y_{k+1}) - A_k F(z_k)\\
 &+ &A_{k+1} (\nabla f(y_{k+1})+g'(y_{k+1})) \cdot \left(\frac{a_{k+1}}{A_{k+1}} x + \frac{A_k}{A_{k+1}} z_k - y_{k+1} \right ) \,,
\end{eqnarray*}
что завершает доказательство.
\end{proof}

\begin{lemma} \label{lem:basic4}
Обозначим $\lambda_{k+1} := \frac{a_{k+1}^2}{A_{k+1}}$ и $\tilde{x}_k := \frac{a_{k+1}}{A_{k+1}} x_{k} + \frac{A_k}{A_{k+1}} y_k$, и получим:
\begin{align*}
& \psi_{k+1}(x_{k+1}) - A_{k+1} F(y_{k+1}) - (\psi_k(x_k) - A_k F(y_k)) \\
& \geq \frac{A_{k+1}}{2 \lambda_{k+1}} \bigg( \|y_{k+1} - \tilde{x}_k\|^2 - \|y_{k+1} - (\tilde{x}_k - \lambda_{k+1} (\nabla f(y_{k+1}))+g'(y_{k+1})) \|^2 \bigg) \,.
\end{align*}
А применив дополнительно неравенство \eqref{eq:igdrefined}, получаем
$$\psi_{k}(x_{k})-A_{k}F(y_{k})\geq\frac{1-\sigma^{2}}{2}\sum_{i=1}^{k}\frac{A_{i}}{\lambda_{i}}\|y_{i}-\tilde{x}_{i-1}\|^{2}.$$
\end{lemma}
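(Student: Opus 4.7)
\emph{Шаг 1 (применение Леммы~\ref{lem:basic3}).} Применяю Лемму~\ref{lem:basic3}, подставляя $z_k = y_k$ и $x = x_{k+1}$. Обозначаю для краткости $v := \nabla f(y_{k+1}) + g'(y_{k+1})$. Из правила обновления $x_{k+1} = x_k - a_{k+1} v$ и тождества $\lambda_{k+1} = a_{k+1}^2 / A_{k+1}$ непосредственно получаю $\frac{a_{k+1}}{A_{k+1}} x_{k+1} + \frac{A_k}{A_{k+1}} y_k = \tilde{x}_k - \lambda_{k+1} v$ и $\|x_{k+1}-x_k\|^2 = a_{k+1}^2 \|v\|^2$, так что оценка Леммы~\ref{lem:basic3} переписывается в виде
\[
\psi_{k+1}(x_{k+1}) - A_{k+1} F(y_{k+1}) - \bigl(\psi_k(x_k) - A_k F(y_k)\bigr) \ge A_{k+1}\la v, \tilde{x}_k - y_{k+1}\ra - \tfrac{a_{k+1}^2}{2}\|v\|^2,
\]
где использовано равенство $A_{k+1}\lambda_{k+1} = a_{k+1}^2$ при сокращении слагаемого с $\|v\|^2$, возникающего из перекрёстного члена.

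\emph{Шаг 2 (алгебраическое тождество).} Раскрываю квадрат $\|y_{k+1} - (\tilde{x}_k - \lambda_{k+1} v)\|^2 = \|y_{k+1} - \tilde{x}_k\|^2 + 2\lambda_{k+1} \la v, y_{k+1} - \tilde{x}_k\ra + \lambda_{k+1}^2 \|v\|^2$ и умножаю разность $\|y_{k+1} - \tilde{x}_k\|^2 - \|y_{k+1} - (\tilde{x}_k - \lambda_{k+1} v)\|^2$ на $A_{k+1}/(2\lambda_{k+1})$; вновь применяя $A_{k+1}\lambda_{k+1} = a_{k+1}^2$, убеждаюсь, что результат \emph{в точности} совпадает с правой частью из Шага~1. Это доказывает первое неравенство леммы (причём фактически даже с равенством правых частей).

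\emph{Шаг 3 (телескопирование).} Применяя условие \eqref{eq:igdrefined} в композитной форме (с заменой $\nabla f(y_{k+1})$ на $v$), получаю $\|y_{k+1} - (\tilde{x}_k - \lambda_{k+1} v)\|^2 \le \sigma^2 \|y_{k+1} - \tilde{x}_k\|^2$, так что разность квадратов не меньше $(1-\sigma^2)\|y_{k+1} - \tilde{x}_k\|^2$, и первая часть утверждения даёт
\[
[\psi_{k+1}(x_{k+1}) - A_{k+1} F(y_{k+1})] - [\psi_k(x_k) - A_k F(y_k)] \ge \frac{(1-\sigma^2) A_{k+1}}{2\lambda_{k+1}}\|y_{k+1} - \tilde{x}_k\|^2.
\]
Суммируя это телескопическое неравенство от $i=0$ до $k-1$ и учитывая, что $\psi_0(x_0) - A_0 F(y_0) = 0$ (ибо $A_0 = 0$ и $\psi_0(x_0) = 0$), прихожу ко второй части утверждения.

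\emph{Ожидаемая трудность.} Содержательная часть уже полностью заключена в Лемме~\ref{lem:basic3}; основная техническая работа~--- аккуратная алгебраическая проверка тождества на Шаге~2, где решающую роль играет соотношение $A_{k+1}\lambda_{k+1} = a_{k+1}^2$ (оно превращает ``квадратичный'' остаток $-a_{k+1}^2\|v\|^2/2$ в выражение, представимое в виде разности двух квадратов). Дополнительного внимания заслуживает корректное расширение критерия неточности \eqref{eq:igdrefined} на композитный случай~--- в оригинале~\cite{Bubeck2020} рассматривалась постановка с $g\equiv 0$, но при замене $\nabla f$ на композитный градиент $v$ вся схема переносится без изменений.
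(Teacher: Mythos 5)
Ваше доказательство корректно и по существу совпадает с доказательством в статье: там также применяется Лемма~\ref{lem:basic3} при $z_k = y_k$, $x = x_{k+1}$, только нижняя оценка записывается через $\min_{x}$ квадратичной функции, а вы напрямую подставляете $x_{k+1}$, которое и есть точка этого минимума, так что выкладки эквивалентны. Ваши явные шаги (телескопирование с учётом $\psi_0(x_0)-A_0F(y_0)=0$ и замечание о композитной форме условия \eqref{eq:igdrefined}, согласующейся с Леммой~\ref{lem:controlstepsize}) лишь аккуратно восполняют то, что в статье оставлено неявным.
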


\begin{proof}
Используем Лемму \ref{lem:basic3} при $z_k = y_k$ и $x=x_{k+1}$ и получим, что
(при $\tilde{x} := \frac{a_{k+1}}{A_{k+1}} x + \frac{A_k}{A_{k+1}} y_k$): 
\begin{align*}
& (\nabla f(y_{k+1})+g'(y_{k+1})) \cdot \left(\frac{a_{k+1}}{A_{k+1}} x + \frac{A_k}{A_{k+1}} y_k - y_{k+1} \right )  + \frac{1}{2 A_{k+1}} \|x - x_k\|^2 \\
& = (\nabla f(y_{k+1})+g'(y_{k+1})) \cdot (\tilde{x} - y_{k+1}) + \frac{1}{2 A_{k+1}} \left\|\frac{A_{k+1}}{a_{k+1}} \left(\tilde{x} - \frac{A_k}{A_{k+1}} y_k \right) - x_k \right\|^2 \\
& = (\nabla f(y_{k+1})+g'(y_{k+1})) \cdot (\tilde{x} - y_{k+1}) + \frac{A_{k+1}}{2 a_{k+1}^2} \left\|\tilde{x} - \left(\frac{a_{k+1}}{A_k} x_k + \frac{A_k}{A_{k+1}} y_k \right) \right\|^2 \,.
\end{align*}
Откуда следует следущее неравенство:
\begin{align*}
& \psi_{k+1}(x_{k+1}) - A_{k+1} F(y_{k+1}) - (\psi_k(x_k) - A_k F(y_k)) \\
& \geq A_{k+1} \cdot \min_{x \in \R^d} \left\{ (\nabla f(y_{k+1})+g'(y_{k+1})) \cdot (x - y_{k+1}) + \frac{1}{2 \lambda_{k+1}} \|x - \tilde{x}_k\|^2 \right\} \,.
\end{align*}
Значиение минимума можно лего посчитать.
\end{proof}

Для первого выражения Теоремы \ref{thm:MS} достаточно объединить Лемму \ref{lem:basic4} с Леммой \ref{lem:basic2} и Леммой 2.5 из \cite{Bubeck2020}.
Второе выражение в Теореме \ref{thm:MS} следует из Леммы \ref{lem:basic4} и Леммы \ref{lem:basic1}.

Следующая лемма доказывает, что минимизация ряда Тэйлора порядка $p$ для \eqref{prox_step} может быть представлено, как неявный градиентный шаг для некоторого большого размера шага.
\begin{lemma} \label{lem:controlstepsize}
Неравенство \eqref{eq:igdrefined} верно при $\sigma = 1/2$ для \eqref{prox_step}, из этого следует, что: 
\begin{equation} \label{eq:key4}
\frac{1}{2} \leq \lambda_{k+1} \frac{L_p \cdot \|y_{k+1} - \tilde{x}_k\|^{p-1}}{(p-1)!}  \leq \frac{p}{p+1} \,.
\end{equation}
\end{lemma}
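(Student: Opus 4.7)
The plan is to combine the first-order optimality condition for the Taylor-regularized subproblem \eqref{prox_step} with the standard Taylor remainder bound $\|\nabla f(y) - \nabla \Omega_p(f, x; y)\| \le \frac{L_{p,f}}{p!}\|y - x\|^p$, which follows by integrating \eqref{def_lipshitz} (it is the gradient analogue of \eqref{eq_sumup}). The key observation is that the exact minimizer of $\widetilde{\Omega}^k$ is automatically an approximate implicit gradient step, the discrepancy being controlled by the Lipschitz constant of the $p$-th derivative of $f$.

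First I would write out the stationarity condition $\nabla \widetilde{\Omega}^k(y_{k+1}) = 0$. Since the regularizer $\frac{H}{(p+1)!}\|y - \tilde{x}_k\|^{p+1}$ has gradient $\frac{H}{p!}\|y - \tilde{x}_k\|^{p-1}(y - \tilde{x}_k)$, optimality becomes
\[
\nabla \Omega_p(f, \tilde{x}_k; y_{k+1}) + g'(y_{k+1}) + \frac{H}{p!}\|y_{k+1} - \tilde{x}_k\|^{p-1}(y_{k+1} - \tilde{x}_k) = 0.
\]
Setting $r := \|y_{k+1} - \tilde{x}_k\|$ and matching $\lambda_{k+1}$ through $\lambda_{k+1}\cdot\frac{H r^{p-1}}{p!} = 1$ (i.e. $\lambda_{k+1} = \frac{p!}{H r^{p-1}}$) rewrites this as
\[
y_{k+1} - \bigl(\tilde{x}_k - \lambda_{k+1}(\nabla f(y_{k+1}) + g'(y_{k+1}))\bigr) = \lambda_{k+1}\bigl(\nabla f(y_{k+1}) - \nabla \Omega_p(f, \tilde{x}_k; y_{k+1})\bigr).
\]
Taking norms and invoking the Taylor remainder bound produces
\[
\|y_{k+1} - (\tilde{x}_k - \lambda_{k+1}\nabla F(y_{k+1}))\| \;\le\; \lambda_{k+1}\, \frac{L_{p,f}}{p!}\, r^p \;=\; \frac{L_{p,f}}{H}\, r,
\]
and under the standing hypothesis $H \ge (p+1)L_{p,f}$ the right-hand side is at most $\frac{r}{p+1} \le \frac{r}{2}$ for every $p \ge 1$, which is precisely \eqref{eq:igdrefined} with $\sigma = 1/2$. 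The inequality \eqref{eq:key4} then reduces to the algebraic identity $\lambda_{k+1}\frac{L_p r^{p-1}}{(p-1)!} = \frac{p L_p}{H}$, which lies in $\bigl[\tfrac12, \tfrac{p}{p+1}\bigr]$ exactly when $(p+1)L_p \le H \le 2p L_p$.

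The main obstacle I expect is the implicit coupling between $\lambda_{k+1}$, $a_{k+1}$, $A_{k+1}$ and $\tilde{x}_k$ inside the outer Monteiro--Svaiter scheme: once $y_{k+1}$ is taken to be the exact minimizer of \eqref{prox_step}, the quantity $r$ depends on $\tilde{x}_k$ and hence on $\lambda_{k+1}$ itself, so the identification $\lambda_{k+1} = p!/(H r^{p-1})$ is really a fixed-point equation. Existence of a pair $(\lambda_{k+1}, y_{k+1})$ that keeps $\lambda_{k+1} H r^{p-1}/p!$ inside $[\tfrac12, \tfrac{p}{p+1}]$ must therefore be established by a continuity / intermediate-value argument in $\lambda_{k+1}$, monotonicity of $a_{k+1}(\lambda_{k+1})$ making this straightforward but not automatic. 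Once this existence step is granted, the Lipschitz-remainder estimate above is the only analytic ingredient and everything else is algebra dictated by the optimality condition.
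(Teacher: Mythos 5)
Your argument establishes the Monteiro--Svaiter condition \eqref{eq:igdrefined} only for one special value of the step size, not for the pairs $(\lambda_{k+1},y_{k+1})$ that the algorithm actually produces, and this is a genuine gap. You fix $\lambda_{k+1}$ by the exact balance $\lambda_{k+1}H\|y_{k+1}-\tilde{x}_k\|^{p-1}/p!=1$, which makes the discrepancy between $\lambda_{k+1}(\nabla f(y_{k+1})+g'(y_{k+1}))$ and the implicit step encoded in the optimality condition vanish, leaving only the Taylor-remainder term and giving $\sigma\le L_{p,f}/H\le 1/(p+1)$. But line 4 of Algorithm \ref{alg:highorder} only guarantees $\lambda_{k+1}H\|y_{k+1}-\tilde{x}_k\|^{p-1}/p!\in[1/2,\,p/(p+1)]$, an interval that excludes your value $1$, and the line search never enforces exact balance; the lemma must therefore hold for every admissible pair in this bracket. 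For such $\lambda_{k+1}$ the vector $y_{k+1}-(\tilde{x}_k-\lambda_{k+1}(\nabla f(y_{k+1})+g'(y_{k+1})))$ contains, besides $\lambda_{k+1}\bigl(\nabla f(y_{k+1})-\nabla\Omega_p(f,\tilde{x}_k;y_{k+1})\bigr)$, the additional term $\bigl(\lambda_{k+1}-\tfrac{p!}{H\|y_{k+1}-\tilde{x}_k\|^{p-1}}\bigr)\bigl(\nabla\Omega_p(f,\tilde{x}_k;y_{k+1})+g'(y_{k+1})\bigr)$, and the core of the paper's proof is precisely the control of this term: by the stationarity condition \eqref{eq:KKT_TD} the norm of the model gradient equals $\tfrac{H}{p!}\|y_{k+1}-\tilde{x}_k\|^{p}$, so the triangle inequality gives the bound $\|y_{k+1}-\tilde{x}_k\|\bigl(\eta/p+\bigl|\eta\tfrac{p+1}{p}-1\bigr|\bigr)=(1-\eta)\|y_{k+1}-\tilde{x}_k\|\le\tfrac12\|y_{k+1}-\tilde{x}_k\|$ for every $\eta$ in the bracket $[1/2,\,p/(p+1)]$. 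This step is absent from your proposal, so the implication that Theorem \ref{thm:MS} actually consumes (the bracket condition implies \eqref{eq:igdrefined} with $\sigma=1/2$) is not proved.

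Two further symptoms of the same normalization mix-up: your extra restriction $H\le 2pL_{p,f}$ appears nowhere in the paper and is needed only because you test \eqref{eq:key4} (written in the normalization $H=(p+1)L_{p,f}$, with $L_p/(p-1)!$) at your exact-balance $\lambda_{k+1}$, whereas in the paper the bracket is an assumption enforced by the search over $\lambda_{k+1}$, not a consequence of one particular choice of $\lambda_{k+1}$. Your closing continuity/fixed-point remark concerns the existence of an admissible pair (this is the separate issue of the $O(\ln(\varepsilon^{-1}))$ re-solves of \eqref{prox_step} mentioned in Theorem \ref{theoremCATD}) and does not substitute for handling an arbitrary $\lambda_{k+1}$ satisfying only the interval condition.
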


\begin{proof}
Из условия оптимальности следует, что
\begin{equation} \label{eq:KKT_TD}
\nabla_y f_p(y_{k+1}, \tilde{x}_k) + \frac{L_p \cdot (p+1)}{p!} (y_{k+1} - \tilde{x}_k) \|y_{k+1} - \tilde{x}_k\|^{p-1} + g'(y_{k+1})= 0 \,. 
\end{equation}
Откуда следует
\begin{align*}
&y_{k+1} - (\tilde{x}_k - \lambda_{k+1} (\nabla f(y_{k+1})+ g'(y_{k+1})))  = \lambda_{k+1} (\nabla f(y_{k+1})+ g'(y_{k+1}))\\
&- \frac{p!}{L_p \cdot (p+1) \cdot \|y_{k+1} - \tilde{x}_k\|^{p-1}} (\nabla_y f_p(y_{k+1}, \tilde{x}_k)+g'(y_{k+1})) \,.
\end{align*}
Используя ряд Тэйлора для градиента функции получаем:
\[
\|\nabla f(y) - \nabla_y f_p(y, x)\| \leq \frac{L_p}{p!} \|y - x\|^p \,,
\]
таким образом
\begin{align*}
& \|y_{k+1} - (\tilde{x}_k - \lambda_{k+1} (\nabla f(y_{k+1})+ g'(y_{k+1}))) \| \\
& \leq \lambda_{k+1} \frac{L_p}{p!} \|y_{k+1} - \tilde{x}_k\|^p  \\
& + \left|\lambda_{k+1} - \frac{p!}{L_p \cdot (p+1) \cdot \|y_{k+1} - \tilde{x}_k\|^{p-1}} \right| \cdot \|\nabla_y f_p(y_{k+1}, \tilde{x}_k)+ g'(y_{k+1})\| \\
& \leq \|y_{k+1} - \tilde{x}_k\|  \\
&\cdot\left(\lambda_{k+1} \frac{L_p}{p!} \|y_{k+1} - \tilde{x}_k\|^{p-1} + \left|\lambda_{k+1}\frac{L_p \cdot (p+1) \cdot \|y_{k+1} - \tilde{x}_k\|^{p-1}}{p!} - 1\right|  \right) \\
&=\|y_{k+1}-\tilde{x}_{k}\|\left(\frac{\eta}{p}+\left|\eta\cdot\frac{p+1}{p}-1\right|\right),
\end{align*}
где мы используем \eqref{eq:KKT_TD} во втором неравенстве, и предполагаем \\$\eta := \lambda_{k+1} \frac{L_p \cdot \|y_{k+1} - \tilde{x}_k\|^{p-1}}{(p-1)!}$ в последнем равенстве.  Итоговый результат получаем из предположения, что $1/2 \leq \eta \leq p/(p+1)$ в \eqref{eq:key4}. 
\end{proof}

В заключении, если мы заменим $\|x^{\ast}\|$ на $\|x_0-x^{\ast}\|$ в Лемме 3.3 и используем Лемму 3.4 из \cite{Bubeck2020}, то мы получаем доказательство Теоремы \ref{theoremCATD}. 

\section*{{\,\,\,\,\,\,\,\,\,\,\,\,\,\,\,\,\,\,\,\,\,\,\,\,\,\,\,\,\,\,\,\,\,\,\,\,\,\,\,\,\,\,\,\,\itПриложение 2} } \label{sec:CATDproff}
В данной секции мы докажем Теорему \ref{theoremRestartCATD}.
\begin{proof}
Так как функция $F$ является $r$-равномерно выпуклой, то мы получаем
\begin{align*}
    R_{k+1}&=\|z_{k+1}-x_{\ast}\| \leq \left( \frac{r \left( F(z_{k+1})-F(x_{\ast}) \right)}{\sigma_r} \right)^{\frac{1}{r}}
    \overset{\eqref{speedCATD}}{\leq} \left( \frac{r \left( \frac{c_p L_p R_{k}^{p+1}}{N_k^{\frac{3p+1}{2}}} \right)}{\sigma_r} \right)^{\frac{1}{r}}\\ &=\left( \frac{r c_p L_p R_{k}^{p+1}}{\sigma_r N_k^{\frac{3p+1}{2}}} \right)^{\frac{1}{r}}
    \overset{\eqref{numberofrestarts}}{\leq} \left( \frac{ R_{k}^{p+1}}{2^r R_k^{p+1-r} }\right)^{\frac{1}{r}} = \frac{ R_{k}}{2}.
\end{align*}
Теперь вычислим общее чилсло шагов метода \ref{alg:highorder}.
\begin{align*}
    \sum\limits_{k=0}^K N_k &\leq \sum\limits_{k=0}^K \left( \frac{r c_p L_p 2^r}{\sigma_r} R_k^{p+1-r} \right)^{\frac{2}{3p+1}}+K\\
    & = \sum\limits_{k=0}^K \left( \frac{r c_p L_p 2^r}{\sigma_r} (R_0 2^{-k})^{p+1-r} \right)^{\frac{2}{3p+1}}+K\\
    &=\left(\frac{r c_p L_p 2^r R_0^{p+1-r}}{\sigma_r}\right)^{\frac{2}{3p+1}} \sum\limits_{k=0}^K  2^{\frac{-2(p+1-r)k}{3p+1}}+K.
\end{align*}
\end{proof}

\end{document}